\newtheorem{theorem}{Theorem}
\newtheorem{claim}[theorem]{Claim}
\newtheorem{lemma}[theorem]{Lemma}
\newtheorem{corollary}[theorem]{Corollary}
\theoremstyle{definition}
\newtheorem{definition}[theorem]{Definition}
\theoremstyle{remark}
\def\mathunderaccent#1#2 {\let\theaccent#1\skewfactor#2
\mathpalette\putaccentunder}
\def\putaccentunder#1#2{\oalign{$#1#2$\crcr\hidewidth
\vbox to.2ex{\hbox{$#1\skew\skewfactor\theaccent{}$}\vss}\hidewidth}}
\def\smallbox#1{\leavevmode\thinspace\hbox{\vrule\vtop{\vbox
   {\hrule\kern1pt\hbox{\vphantom{\tt/}\thinspace{\tt#1}\thinspace}}
   \kern1pt\hrule}\vrule}\thinspace}
\newcommand{\cf}{{\rm cf}}
\title{List chromatic numbers and singular compactness}
\author{Shimon Garti}
\address{Einstein Institute of Mathematics,
 The Hebrew University of Jerusalem,
 Jerusalem 9190401, Israel}
\email{shimon.garty@mail.huji.ac.il}
\subjclass[2010]{05C15, 05C63}
\keywords{List chromatic number, bipartite graphs, singular cardinals}
\begin{document}
\let\labeloriginal\label
\let\reforiginal\ref
\def\ref#1{\reforiginal{#1}}
\def\label#1{\labeloriginal{#1}}

\begin{abstract}
Let $G=(V,E)$ be a graph of size $\mu$ where $\mu>\cf(\mu)$.
Assume that $\cf(\mu)\leq\cf(\lambda)\leq\lambda<\mu$ and $\eta^\lambda<\mu$ for every $\eta\in\mu$.
We prove that if ${\rm List}(H)\leq\lambda$ for every $H\leq G$ such that $|H|<\mu$ then ${\rm List}(G)\leq\lambda$.
\end{abstract}

\maketitle

\newpage

\section{Introduction}

An abelain group $A$ is called almost free if every subgroup $B$ of $A$ of size less than the size of $A$ is free.
An interesting problem is whether there is an almost free non-free abelian group or whether almost free implies free.
An important parameter here is the cardinality of $A$.
In many cases this question is independent of \textsf{ZFC}.
This includes the case of a successor of a singular cardinal as proved in \cite{MR1249391}.
But if the cardinality of $A$ is a singular cardinal $\mu$ then almost free abelian groups of size $\mu$ are free.

This phenomenon has been studied by Shelah in \cite{MR389579} and it is called \emph{singular compactness}.
The range of singular compactness is much wider than free abelian groups, and it is known to exist in many mathematical areas including graph theory.
In the general description we have a family of structures, a property $\mathcal{P}$ of these structures and a cardinal $\lambda<\mu$ where $\mu$ is a singular cardinal.
We shall say that the family satisfies singular compactness with respect to $\mathcal{P}$ iff given any $\mathfrak{A}$ of size $\mu$ it is true that $\mathcal{P}(\mathfrak{A})\leq\lambda$ whenever $\mathcal{P}(\mathfrak{B})\leq\lambda$ for every substructure $\mathfrak{B}$ of $\mathfrak{A}$ of size less than $\mu$.

In this paper we deal with singular compactness applied to graph theory.
More specifically, we prove a statement about the list chromatic number in infinite graphs.
Let $G=(V,E)$ be a graph.
A function $f:V\rightarrow{\rm Ord}$ is a \emph{good coloring} iff $f(x)\neq f(y)$ whenever $\{x,y\}\in E$.
If $\kappa$ is a cardinal then a $\kappa$-assignment for $G$ is a function $F:V\rightarrow[{\rm Ord}]^\kappa$.
The list-chromatic number of $G$, denoted ${\rm List}(G)$, is the minimal $\kappa$ such that for every $\kappa$-assignment $F:V^G\rightarrow[{\rm Ord}]^\kappa$ one can find a good coloring $f:V^G\rightarrow{\rm Ord}$ so that $f(x)\in F(x)$ whenever $x\in V^G$.

The list chromatic number was defined, independently, by Vizing in \cite{MR498216} and by Erd\H{o}s, Rubin and Taylor in \cite{MR593902}.
Singular compactness with respect to the list chromatic number is the following statement.
Suppose that $G=(V,E)$ is a graph where $|V|=\mu$ and $\mu>\cf(\mu)$.
If $\lambda<\mu$ and ${\rm List}(H)\leq\lambda$ for every $H\leq G$ of size less than $\mu$ then ${\rm List}(G)\leq\lambda$ as well.
It is known that the list chromatic number fails to satisfy singular compcatness without additional assumptions, and a simple example for this failure is given by Komj\'{a}th in \cite{MR3096584}.
Komj\'{a}th proved that if $G$ is a bipartite graph with $V^G=A\cup B, |A|=\aleph_0$ and $|B|<2^{\aleph_0}$ then ${\rm List}(G)\leq\aleph_0$.
On the other hand, there are many bipartite graphs $G$ such that $V^G=A\cup B, |A|=\aleph_0$ and $|B|=2^{\aleph_0}$ and ${\rm List}(G)>\aleph_0$.
Hence if one forces $2^{\aleph_0}=\mu$ where $\mu>\cf(\mu)$ then singular compactness fails with respect to ${\rm List}(G)$.

The example of Komj\'{a}th generalizes to larger cardinalities, and one concludes that ${\rm List}(G)$ does not satisfy singular compactness without further assumptions.
However, Usuba proved in \cite{MR4732367} that under some cardinal arithmetic assumptions on the singular cardinal $\mu$ and the power-set operation below $\mu$ one can prove singular compactness for ${\rm List}(G)$ where $|G|=\mu$.
He asked whether the assumption $\eta^\lambda<\mu$ for every $\eta\in\mu$ will be sufficient, and gave some partial results, see \cite[Question 3.8]{MR4732367}.
Our goal is to give a positive answer to this question.

\newpage

\section{Singular compactness}

In this section we give sufficient conditions for singular compactness with respect to the list chromatic number in terms of cardinal arithmetic.
As a first step we introduce a slight improvement upon \cite[Fact 2.4]{MR4732367} by proving a similar statement but weakening one of the assumptions.
Given a graph $G=(V,E)$ and $x\in V$ let $E^x=\{y\in V:\{x,y\}\in E\}$.

\begin{lemma}
\label{lem2.4} Let $G=(V,E)$ be a graph and suppose that $\nu\leq\tau$ are infinite cardinals.
Assume that there are $W_0,W_1\subseteq V$ such that $|W_0|=\tau,|W_1|\geq\tau^\nu,W_0\cap W_1=\varnothing$ and $|E^x\cap W_0|\geq\nu$ for every $x\in W_1$.
Then ${\rm List}(G)>\nu$.
\end{lemma}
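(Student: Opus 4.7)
The plan is to adapt the classical Erd\H{o}s--Rubin--Taylor argument for ${\rm List}(K_{\lambda,\lambda^\lambda}) > \lambda$. I will construct a $\lambda$-list assignment $F$ on $V$ admitting no good coloring. For each $x \in W_1$ first pick an arbitrary $\lambda$-subset $A_x \subseteq E^x \cap W_0$, available by the hypothesis $|E^x \cap W_0| \geq \lambda$. Then set $F(a) = \{a\} \times \lambda$ for $a \in W_0$, and $F(x) = \{(a,h_x(a)) : a \in A_x\}$ for $x \in W_1$, where the functions $h_x : A_x \to \lambda$ will be specified below. Both lists have size $\lambda$---the latter because the first coordinates are distinct. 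For vertices outside $W_0 \cup W_1$, any disjoint $\lambda$-set works.

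Any good coloring $c$ from $F$ must have $c(a) = (a, c^*(a))$ for some $c^* : W_0 \to \lambda$. A direct computation---using $A_x \subseteq E^x$ together with the distinctness of first coordinates in $F(x)$---shows that $c$ fails to extend to a vertex $x \in W_1$ precisely when $h_x = c^* \upharpoonright A_x$. Hence, proving ${\rm List}(G) > \lambda$ reduces to choosing the pairs $(A_x, h_x)_{x \in W_1}$ so that for every function $c^* : W_0 \to \lambda$ there is some $x \in W_1$ with $h_x = c^* \upharpoonright A_x$.

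The total number of pairs $(A,h)$ with $A \in [W_0]^\lambda$ and $h : A \to \lambda$ is $\tau^\lambda \cdot \lambda^\lambda = \tau^\lambda$, matching the hypothesis $|W_1| \geq \tau^\lambda$, so \emph{a priori} there is enough room in $W_1$. The main obstacle is that the constraint $A_x \subseteq E^x \cap W_0$ prevents a free bijection between $W_1$ and these pairs, since some $A \in [W_0]^\lambda$ may fail to be a subset of any $E^x$. To bypass this I would aim to locate a single $A_0 \in [W_0]^\lambda$ together with a sub-family $W_1' \subseteq W_1$ of size $\geq \lambda^\lambda$ such that $A_0 \subseteq E^x$ for all $x \in W_1'$; this reduces the problem to applying the classical Erd\H{o}s--Rubin--Taylor argument to the complete bipartite subgraph $K_{\lambda,\lambda^\lambda}$ sitting on $A_0 \cup W_1'$. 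Extracting $A_0$ and $W_1'$ by an iterated pigeonhole on the $\lambda$-many coordinates of $A_0$---choosing them one at a time while maintaining a sub-family of $W_1$ of the appropriate size at every step, including the limit ones---is the crux and the most delicate part of the proof, and is where the arithmetic interplay between $|W_1|\geq\tau^\lambda$ and the degree bound $\geq\lambda$ is used essentially.
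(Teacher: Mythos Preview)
Your reduction to the covering condition ``for every $c^*:W_0\to\lambda$ there is $x\in W_1$ with $h_x=c^*\upharpoonright A_x$'' is correct, but the strategy you propose for achieving it---finding a single $A_0\in[W_0]^\lambda$ with $A_0\subseteq E^x$ for at least $\lambda^\lambda$ many $x\in W_1$---cannot be carried out in general. Take $\lambda=\tau=\aleph_0$, put $W_0=\omega$, and let $\{E^x\cap W_0:x\in W_1\}$ be an almost disjoint family of size $2^{\aleph_0}$; then every hypothesis of the lemma holds, yet any infinite $A_0\subseteq\omega$ is contained in at most one $E^x$. Your iterated pigeonhole does succeed at each finite stage (one can keep $|W_1^{(n)}|=2^{\aleph_0}$ by ordinary pigeonhole over the $\aleph_0$ vertices of $W_0$), but it collapses precisely at the first limit: $\bigcap_{n<\omega}W_1^{(n)}$ has at most one element. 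So the ``delicate'' limit step you flag is not merely delicate---it is false.

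The paper's argument avoids this obstacle by never looking for a common neighborhood. It fixes pairwise \emph{disjoint} lists $A_y$ for $y\in W_0$, so that any amenable coloring $f$ yields a choice function $f\upharpoonright W_0\in\prod_{y\in W_0}A_y$. The vertices of $W_1$ are then used to enumerate this product as $(g_z:z\in W_1)$, and one sets $F(z)=\{g_z(y):y\in E^z\cap W_0\}$. Given $f$, pick $z$ with $g_z=f\upharpoonright W_0$; then $f(z)\in F(z)$ forces $f(z)=f(y)$ for some neighbor $y\in E^z\cap W_0$. The point is that only agreement of $g_z$ with $f$ on the \emph{individual} set $E^z\cap W_0$ is needed, so no uniformity among the neighborhoods is required. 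In the almost disjoint example above this works perfectly (there $\lambda^\tau=\tau^\lambda=2^{\aleph_0}$, so the enumeration goes through), whereas your $K_{\lambda,\lambda^\lambda}$ reduction fails outright.
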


\begin{proof}
Since $H\leq G$ implies ${\rm List}(H)\leq{\rm List}(G)$ it is sufficient to find some $H\leq G$ for which ${\rm List}(H)>\nu$.
Hence we assume that $|E^x\cap W_0|=\nu$ for every $x\in W_1$ by eliminating some edges from $E$ if needed.

Fix a disjoint family $\{A_y:y\in W_0\}\subseteq[\tau]^\nu$.
The cardinality of the product $\prod_{y\in W_0}A_y$ is $\tau^\nu$.
Let $H$ be the graph induced by $W_0\cup W_1$.
We shall define a $\nu$-assignment for $H$ so that no function $f:V^H\rightarrow{\rm Ord}$, amenable to this assignment, will be good.
This will show that ${\rm List}(H)>\nu$ and hence ${\rm List}(G)>\nu$ as desired.

Enumerate the elements of $\prod_{y\in W_0}A_y$ by $(g_z:z\in W_1)$.
If $|W_1|>\tau^\nu$ then either take a subset of $W_1$ or size $\tau^\nu$ or use repetitions.
For every $y\in W_0$ let $F(y)=A_y$ and for every $z\in W_1$ let $F(z)=\{g_z(y):y\in W_0,\{y,z\}\in E\}$.
Observe that $F$ is a $\nu$-assignment since $|E^x\cap W_0|=\nu$ for every $x\in W_1$ and since $A_y\cap A_z=\varnothing$ whenever $y,z\in W_0$ and $y\neq z$.

Suppose that $f$ is a coloring of $H$ for which $f(x)\in F(x)$ whenever $x\in W_0\cup W_1=V^H$.
Notice that $f\upharpoonright W_0\in\prod_{y\in W_0}A_y$ and choose $z\in W_1$ such that $f\upharpoonright W_0=g_z$.
By definition, $F(z)=\{g_z(y):y\in E^z\cap W_0\}=\{f(y):y\in E^z\cap W_0\}$.
But $f(z)\in F(z)$ so $f(z)=f(y)$ for some $y\in E^z\cap W_0$.
This means that $f$ is not a good coloring, so we are done.
\end{proof}

The next lemma that we need will be a variation of a parallel statement from \cite{MR4732367}.
Recall that if $A$ is a set and $\delta$ is an ordinal then a $\delta$-filtration of $A$ is a $\subseteq$-increasing and continuous sequence $(A_\alpha:\alpha\in\delta)$ such that $|A_\alpha|<|A|$ for every $\alpha\in\delta$ and $A=\bigcup_{\alpha\in\delta}A_\alpha$.
We need a weaker version of this concept.

\begin{definition}
  \label{defweakfiltration} Let $V$ be a set and let $\delta$ be an ordinal.
  A sequence of sets $(A_\alpha:\alpha\in\delta)$ is a $\delta$-weak-filtration of $V$ when $|A_\alpha|<|V|$ for every $\alpha\in\delta$, $A_\alpha\subseteq{A_\beta}$ whenever $\alpha\in\beta$ and $V=\bigcup_{\alpha\in\delta}A_\alpha$.
\end{definition}

The difference between filtration and weak filtration is that in the latter we omit the requirement of continuity from the sequence of sets that constitutes the filtration.
The following is an adaptation of \cite[Lemma 3.5]{MR4732367} to the concept of weak filtration.

\begin{lemma}
\label{lem3.5} Let $G=(V,E)$ be a graph and let $\nu$ be an infinite cardinal.
Let $(A_\alpha:\alpha\in\delta)$ be a weak filtration of $V$ such that:
\begin{enumerate}
\item [$(\aleph)$] ${\rm List}(A_\alpha)\leq\nu$ for every $\alpha\in\delta$.
\item [$(\beth)$] $|E^x\cap A_\alpha|<\nu$ for every $\alpha\in\delta$ and every $x\in V-A_\alpha$.
\item [$(\gimel)$] $\delta\leq\cf(\nu)$.
\end{enumerate}
Then ${\rm List}(G)\leq\nu$.
\end{lemma}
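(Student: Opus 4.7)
The plan is to produce a good coloring of $G$ respecting an arbitrary $\lambda$-assignment $F:V\to[{\rm Ord}]^\lambda$ by building, via transfinite recursion along $\delta$, a coherent sequence of good partial colorings $f_\alpha:A_\alpha\to{\rm Ord}$ with $f_\alpha(x)\in F(x)$ and $f_\beta\upharpoonright A_\alpha=f_\alpha$ whenever $\alpha\le\beta<\delta$. Continuity of the filtration and the fact that each edge has only two endpoints guarantee that $f=\bigcup_{\alpha\in\delta}f_\alpha$ is then a well-defined good coloring of $G$ respecting $F$, which is exactly what ${\rm List}(G)\le\lambda$ demands.

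The base case $\alpha=0$ uses $(\aleph)$ applied directly to $F\upharpoonright A_0$, and a limit step $\alpha$ is handled by setting $f_\alpha=\bigcup_{\beta<\alpha}f_\beta$. The substantive step is the successor. Given $f_\alpha$, let $B_\alpha=A_{\alpha+1}-A_\alpha$ and for every $x\in B_\alpha$ define the reduced list
\[
F_\alpha(x)=F(x)-\{f_\alpha(y):y\in E^x\cap A_\alpha\}.
\]
By hypothesis $(\beth)$ fewer than $\lambda$ colors are removed from a list of size $\lambda$, so $|F_\alpha(x)|=\lambda$. The subgraph induced on $B_\alpha$ is a subgraph of the one induced on $A_{\alpha+1}$, so by $(\aleph)$ together with the monotonicity ${\rm List}(H)\le{\rm List}(G)$ for $H\le G$ we obtain ${\rm List}(B_\alpha)\le\lambda$; hence $F_\alpha$ admits a good coloring $g_\alpha:B_\alpha\to{\rm Ord}$ with $g_\alpha(x)\in F_\alpha(x)$. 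Put $f_{\alpha+1}:=f_\alpha\cup g_\alpha$.

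The one thing to verify is that $f_{\alpha+1}$ is a good coloring of the subgraph induced on $A_{\alpha+1}$. Edges entirely inside $A_\alpha$ are handled by $f_\alpha$ and edges entirely inside $B_\alpha$ by $g_\alpha$, so only a crossing edge $\{x,y\}$ with $x\in A_\alpha$ and $y\in B_\alpha$ requires attention. For such an edge $x\in E^y\cap A_\alpha$, so $f_\alpha(x)$ was explicitly deleted when forming $F_\alpha(y)$, and therefore $f_{\alpha+1}(y)=g_\alpha(y)\neq f_\alpha(x)=f_{\alpha+1}(x)$. This crossing-edge verification is the real content of the argument, and it succeeds precisely because $(\beth)$ keeps the reduced list of full size $\lambda$; everything else is bookkeeping.
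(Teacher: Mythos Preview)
The paper does not supply its own proof of this lemma; it is quoted verbatim from \cite{usuba} and invoked as a black box in the proof of Theorem~\ref{thmmt}. Your argument is correct and is the natural proof: build the coloring along the filtration, using $(\beth)$ at successor stages to shrink each new vertex's list by fewer than $\lambda$ colors so that $(\aleph)$ (via monotonicity of ${\rm List}$) still yields an admissible coloring on the increment $B_\alpha=A_{\alpha+1}-A_\alpha$. The crossing-edge check you isolate is exactly the point of the construction, and continuity of the filtration handles both limit stages and the final union. There is nothing to add or correct.
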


\begin{proof}
  Let $F:V\rightarrow[{\rm Ord}]^\nu$ be a $\nu$-assignment.
  Our goal is to construct a good coloring $f$ of $V$ so that $f(x)\in F(x)$ for every $x\in{V}$.
  We proceed by induction on $\alpha\in\delta$.
  Arriving at $\alpha$, suppose that $f\upharpoonright{A_\beta}$ is a good coloring that satisfies $f(x)\in F(x)$ for each $x\in{A_\beta}$, and this holds for every $\beta\in\alpha$.

  Define $f\upharpoonright\bigcup_{\beta\in\alpha}A_\beta = \bigcup_{\beta\in\alpha}f\upharpoonright{A_\beta}$.
  Let $Y=A_\alpha-\bigcup_{\beta\in\alpha}A_\beta$, and notice that $Y$ can be empty.
  Since $Y\subseteq{A_\alpha}$ we see that ${\rm List}(Y)\leq{\rm List}(A_\alpha)\leq\nu$.
  If $x\in{Y}$ then $|E^x\cap\bigcup_{\beta\in\alpha}A_\beta|<\nu$ since $\delta\leq\cf(\nu)$ and $\alpha\in\delta$.
  Thus one can define $F'(x)=F(x)-(f''(E^x\cap\bigcup_{\beta\in\alpha}A_\beta))$, so $|F'(x)|=\nu$ and hence $F'$ is a $\nu$-assignment for $Y$.

  Let $g$ be a good coloring of $Y$ such that $g(x)\in F'(x)$ for every $x\in{Y}$.
  Define $f\upharpoonright{A_\alpha} = (f\upharpoonright\bigcup_{\beta\in\alpha}A_\beta)\cup{g}$, and let $f=\bigcup_{\alpha\in\delta}f\upharpoonright{A_\alpha}$.
  It follows that $f$ is a good coloring of $V$, and $f(x)\in F(x)$ whenever $x\in{V}$, so we are done.
\end{proof}

We can prove now our main result in a similar way to that of \cite[Proposition 3.6]{MR4732367}, but using Lemma \ref{lem2.4} we can relax the assumptions needed for singular compactness.
Notice that if our singular cardinal is strong limit then singular compactness holds at $\mu$ for the list chromatic number with no additional parameters.
That is, for every $\lambda<\mu$ if ${\rm List}(H)\leq\lambda$ whenever $H\leq G$ is of size less than $\mu$ then ${\rm List}(G)\leq\lambda$ as well.

\begin{theorem}
\label{thmmt} Assume that:
\begin{enumerate}
\item [$(a)$] $\mu>\cf(\mu)=\theta$ and $\theta\leq\cf(\lambda)\leq\lambda<\mu$.
\item [$(b)$] $\eta^\lambda<\mu$ for every $\eta<\mu$.
\item [$(c)$] $G=(V,E)$ is a graph of size $\mu$.
\end{enumerate}
If ${\rm List}(H)\leq\lambda$ for every $H\leq G$ of size less than $\mu$ then ${\rm List}(G)\leq\lambda$.
Hence if $\mu$ is strong limit then the statement holds for every $\lambda\in{\rm Reg}\cap[\theta,\mu)$.
\end{theorem}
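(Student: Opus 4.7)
\emph{Strategy.} The goal is to apply Lemma \ref{lem3.5}, so the task reduces to constructing a filtration $(A_\alpha : \alpha < \theta)$ of $V$ with each $|A_\alpha| < \mu$ and each $A_\alpha$ \emph{$\lambda$-closed} (every $x \in V \setminus A_\alpha$ has $|E^x \cap A_\alpha| < \lambda$). Condition $(\aleph)$ of Lemma \ref{lem3.5} will then be immediate from the hypothesis of the theorem, and $(\beth)$ is precisely $\lambda$-closedness. The argument follows the template of \cite[Proposition 3.6]{usuba}; the new ingredient is the use of Lemma \ref{lem2.4} in place of a stronger cardinal arithmetic hypothesis.

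\emph{Key estimate.} The essential use of Lemma \ref{lem2.4} is, in contrapositive form, a closure bound: for every $A \subseteq V$ with $\lambda \leq |A| < \mu$, the set
\[
W(A) := \{x \in V \setminus A : |E^x \cap A| \geq \lambda\}
\]
has cardinality strictly less than $|A|^\lambda$, and hence less than $\mu$ by hypothesis $(b)$. Indeed, were $|W(A)| \geq |A|^\lambda$, taking $W_0 = A$ and $W_1 \subseteq W(A)$ of size $|A|^\lambda$, Lemma \ref{lem2.4} with $\tau = |A|$ would produce an induced subgraph $H \leq G$ of size $|A|^\lambda < \mu$ with ${\rm List}(H) > \lambda$, contradicting the hypothesis. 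Define $\mathrm{cl}(A) = A \cup W(A)$ and let $\mathrm{cl}^*(A)$ denote the $\lambda^+$-th transfinite iterate. Regularity of $\lambda^+$ guarantees that $\mathrm{cl}^*(A)$ is $\lambda$-closed (any $\lambda$ neighbors of some $x$ in $\mathrm{cl}^*(A)$ must already lie in some $A^{(\alpha)}$ with $\alpha < \lambda^+$, so $x$ was absorbed at stage $\alpha+1$), while iterating the estimate gives $|\mathrm{cl}^*(A)| \leq (|A| + \lambda^+)^\lambda < \mu$ whenever $|A| < \mu$.

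\emph{Filtration and main obstacle.} Fix a continuous increasing decomposition $V = \bigcup_{\alpha < \theta} V_\alpha$ with $|V_\alpha| < \mu$ and $\lambda^+ \leq |V_0|$, and set $A_\alpha = \mathrm{cl}^*\bigl(V_\alpha \cup \bigcup_{\beta < \alpha} A_\beta\bigr)$. This produces an increasing sequence of $\lambda$-closed sets of size $<\mu$ covering $V$. The principal obstacle is continuity at a limit stage $\beta$ with $\cf(\beta) \geq \lambda$, where the union $\bigcup_{\alpha<\beta} A_\alpha$ need not itself be $\lambda$-closed, so the closure performed at stage $\beta$ may add genuinely new vertices and break the equality $A_\beta = \bigcup_{\alpha<\beta} A_\alpha$. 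I would resolve this by a bookkeeping: at each successor stage preemptively absorb into $V_{\alpha+1}$ the vertices that would otherwise appear only upon re-closing at a later limit of high cofinality. Since each such limit contributes fewer than $\mu$ anticipated vertices and there are fewer than $\mu$ limits below $\theta$, the enlargement respects $|A_\alpha| < \mu$ throughout. With the filtration in hand, Lemma \ref{lem3.5} yields ${\rm List}(G) \leq \lambda$. The strong-limit clause is immediate, since $(b)$ is automatic for every $\lambda < \mu$ when $\mu$ is strong limit.
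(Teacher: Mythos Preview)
Your key estimate is correct and matches the paper's central observation: Lemma~\ref{lem2.4} gives $|W(A)|<|A|^\lambda<\mu$, and your iterated closure $\mathrm{cl}^*(A)$ is indeed $\lambda$-closed and of size at most $|A|^\lambda$. The gap is in the filtration, and your diagnosis of the continuity obstacle is inverted.

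At a limit $\beta$ with $\cf(\beta)>\lambda$, the union $\bigcup_{\alpha<\beta}A_\alpha$ \emph{is} $\lambda$-closed: any $\lambda$ neighbours of some $x$ lying in the union are already contained in a single $A_\alpha$ (since $\lambda<\cf(\beta)$), and this contradicts the $\lambda$-closedness of $A_\alpha$. So $\mathrm{cl}^*$ adds nothing there and continuity is automatic. The genuine obstacle occurs at $\cf(\beta)\leq\lambda$ (for regular $\lambda$, precisely at $\cf(\beta)=\lambda$): here a vertex may have fewer than $\lambda$ neighbours in each $A_\alpha$ yet $\lambda$ many in the union. Your bookkeeping proposal, which targets ``limits of high cofinality'', is therefore aimed at the wrong stages; and even re-aimed, the description is circular, since knowing which vertices to pre-absorb at stage $\alpha+1$ requires knowing the later $A_\gamma$, which in turn depend on what was absorbed.

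The paper sidesteps this by replacing the one-dimensional iterated closure with a two-dimensional array of elementary submodels $(M^\alpha_i:i\in\theta,\alpha\in\lambda^+)$: the $\alpha$-direction plays the role of your $\mathrm{cl}^*$ iteration, while continuity in the $i$-direction is simply imposed as a requirement on the array. Setting $A_i=\bigl(\bigcup_{\alpha<\lambda^+}M^\alpha_i\bigr)\cap\mu$, continuity of $(A_i:i\in\theta)$ is inherited for free. The $\lambda$-closedness of $A_i$ is then obtained not by literal closure but by elementarity: if some $x\notin A_i$ had $\lambda$ neighbours in $A_i$, then by regularity of $\lambda^+$ already $\lambda$ many in some $M^\alpha_i$; the set of all such $x$ is definable in $M_i$ and has size $<\nu_i$ by your key estimate, hence is a subset of $M_i$, forcing $x\in A_i$ after all. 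This decoupling of continuity from closedness is the idea your sketch is missing.
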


\begin{proof}
Let $(\mu_i:i\in\theta)$ be an increasing continuous sequence of cardinals such that $\theta,\lambda<\mu_0$ and $\mu=\bigcup_{i\in\theta}\mu_i$.
For every $i\in\theta$ let $\nu_i=\mu_i^\lambda$, so $\nu_i<\mu$ for every $i\in\theta$ by virtue of $(b)$. Notice that $\nu_i^\lambda=\nu_i$ for every $i\in\theta$.

The following observation is central to the proof.
If $U\subseteq V$ and $|U|=\nu_i$ then $|Y_U|<\nu_i$ where $Y_U=\{x\in V:|E^x\cap U|\geq\lambda\}$.
Indeed, if $U$ forms a counterexample then one can apply Lemma \ref{lem2.4} to the subgraph of $G$ induced by $U\cup Y_U$ and conclude that ${\rm List}(U\cup Y_U)>\lambda$ contrary to the assumption that ${\rm List}(H)\leq\lambda$ whenever $H\leq G$ and $|H|<\mu$.

Bearing in mind the above observation we choose an array $(M^\alpha_i:i\in\theta,\alpha\in\lambda^+)$ of elementary submodels of $\mathcal{H}(\chi)$ where $\chi$ is a sufficiently large regular cardinal, such that the following requirements are met:
\begin{enumerate}
\item [$(\alpha)$] $|M^\alpha_i|=\nu_i$ and $\nu_i+1\subseteq M^\alpha_i$ for every $i\in\theta,\alpha\in\lambda^+$.
\item [$(\beta)$] $(M^\alpha_i:i\in\theta)$ is $\subseteq$-increasing\footnote{Notice that we do not require this sequence to be continuous.} for every $\alpha\in\lambda^+$.
\item [$(\gamma)$] If $\alpha\in\lambda^+$ is a limit ordinal then $M^\alpha_i=\bigcup_{\beta\in\alpha}M^\beta_i$ for every $i\in\theta$.
\item [$(\delta)$] $(M^\alpha_i:i\in\theta)\in M^{\alpha+1}_0$ for every $\alpha\in\lambda^+$.
\item [$(\varepsilon)$] $\mu,G\in M^\alpha_i$ for every $i\in\theta,\alpha\in\lambda^+$.
\end{enumerate}
We may assume, without loss of generality, that $V=\mu$ so $G=(\mu,E)$.
For every $i\in\theta$ let $M_i=\bigcup_{\alpha\in\lambda^+}M^\alpha_i$ and let $A_i=M_i\cap\mu$.
Notice that $M^\alpha_i\in M_i$ for every $i\in\theta,\alpha\in\lambda^+$ and $\mu\subseteq\bigcup_{i\in\theta}M_i$.
Also, $|A_i|=\nu_i<\mu$ and hence $(A_i:i\in\theta)$ is a weak filtration of $V$.

We claim that for every $i\in\theta$ and every $x\in V-A_i$ it is true that $|E^x\cap A_i|<\lambda$.
If we prove this claim then from Lemma \ref{lem3.5} we will infer that ${\rm List}(G)\leq\lambda$ and the proof of the theorem will be accomplished.
Assume, therefore, that the claim fails and fix $i\in\theta$ and $x\in\mu-A_i$ such that $|E^x\cap A_i|\geq\lambda$.
Since $A_i=M_i\cap\mu=\bigcup_{\alpha\in\lambda^+}M^\alpha_i\cap\mu$ one can choose $\alpha\in\lambda^+$ for which $|E^x\cap M^\alpha_i|\geq\lambda$.

Define $Y=\{y\in\mu:|E^y\cap M^\alpha_i|\geq\lambda\}$.
The set $Y$ is definable in $M_i$ since $M^\alpha_i\in M_i$ as well as the other parameters, so $Y\in M_i$.
By the observation at the beginning of the proof we have $|Y|<\nu_i$ and hence $Y\subseteq M_i$.
But $x$ satisfies $|E^x\cap M^\alpha_i|\geq\lambda$ and hence $x\in Y\subseteq M_i$.
This means that $x\in M_i\cap\mu=A_i$, which is impossible as $x\in\mu-A_i$, so we are done.
\end{proof}

We indicate that if $\mu$ is a singular cardinal of countable cofinality, that is $\theta=\aleph_0$, then one can require that the sequence $(M^\alpha_i:i\in\theta)$ will be continuous for every $\alpha\in\lambda^+$.
Indeed, the sequence of cardinals $(\nu_i:i\in\omega)$ will be continuous in this case (simply because there are no limit points in the sequence).
This means that in Lemma \ref{lem3.5} one can use filtrations rather than weak-filtrations.

As mentioned in the introduction, the invariant of list chromatic numbers does not satisfy singular compactness without further assumptions.
One may wonder whether $\eta^\lambda<\mu$ for every $\eta<\mu$ is the optimal assumption for singular compactness.
There is no much room for improving this assumption since $2^\omega=\mu$ already implies the failure of singular compactness with respect to the list chromatic number at $\mu$, and $\omega$ can be replaced by every strongly inaccessible cardinal $\kappa$.

Pondering upon the possibility that $2^\lambda<\mu$ is insufficient for singular compactness at $\mu$ (with $\lambda$ as a parameter) we conclude from the above theorem that in such a case there must be $\eta<\mu$ so that $\eta^\lambda\geq\mu$.
The following claim shows that this cardinal $\eta$ has some interesting properties.

\begin{claim}
\label{clmoptimal} Suppose that $2^\lambda<\mu, \mu$ is a singular cardinal and singular compactness for the list chromatic number fails at $\mu$ with respect to $\lambda$.
Then there is a singular cardinal $\eta<\mu$ which satisfies $\cf(\eta)\leq\lambda, \eta^\lambda\geq\mu$ and $\theta<\eta\Rightarrow\theta^\lambda<\eta$.
\end{claim}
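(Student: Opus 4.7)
The plan is to apply the contrapositive of Theorem~\ref{thmmt}. Since singular compactness fails at~$\mu$ with respect to~$\lambda$ and assumption~$(a)$ of that theorem is given, assumption~$(b)$ must fail; that is, there exists some $\eta<\mu$ with $\eta^\lambda\geq\mu$. Let $\eta$ denote the \emph{least} such cardinal. I claim that this $\eta$ satisfies all three stated properties.

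The nontrivial property is $\theta<\eta\Rightarrow\theta^\lambda<\eta$, since minimality only directly yields $\theta^\lambda<\mu$. Suppose toward a contradiction that some $\theta<\eta$ satisfies $\theta^\lambda\geq\eta$. Using the identity $(\theta^\lambda)^\lambda=\theta^{\lambda\cdot\lambda}=\theta^\lambda$, we get
\[
\theta^\lambda=(\theta^\lambda)^\lambda\geq\eta^\lambda\geq\mu,
\]
contradicting minimality of $\eta$. Hence $\theta^\lambda<\eta$ for every $\theta<\eta$.

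Next observe that $\eta>\lambda$, because otherwise $\eta^\lambda\leq\lambda^\lambda=2^\lambda<\mu$, contrary to $\eta^\lambda\geq\mu$. Now suppose for contradiction that $\cf(\eta)>\lambda$. Then every function $f\colon\lambda\to\eta$ has bounded range, so $\eta^\lambda=\sup_{\theta<\eta}\theta^\lambda$. By the bound established in the previous paragraph each $\theta^\lambda<\eta$, and since $\cf(\eta)>\lambda$ the supremum of these $\eta$-many (or fewer) cardinals is still below $\eta$, giving $\eta^\lambda\leq\eta<\mu$, a contradiction. Therefore $\cf(\eta)\leq\lambda<\eta$, so in particular $\eta$ is singular.

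The argument is quite short once Theorem~\ref{thmmt} is in hand. The only mildly subtle point is the Hausdorff-style cardinal arithmetic identity $(\theta^\lambda)^\lambda=\theta^\lambda$, which is exactly what is needed to sharpen the minimality consequence $\theta^\lambda<\mu$ into $\theta^\lambda<\eta$; everything else is a standard cofinality calculation.
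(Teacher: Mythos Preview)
Your proof is correct and follows essentially the same argument as the paper: take the minimal $\eta$ with $\eta^\lambda\geq\mu$, use $(\theta^\lambda)^\lambda=\theta^\lambda$ to upgrade minimality to the closure property $\theta<\eta\Rightarrow\theta^\lambda<\eta$, and then a cofinality computation to get $\cf(\eta)\leq\lambda$. One tiny wording slip: the supremum $\sup_{\theta<\eta}\theta^\lambda$ is exactly $\eta$ (since $\theta\leq\theta^\lambda<\eta$), not strictly below $\eta$ as you wrote in prose; your displayed inequality $\eta^\lambda\leq\eta<\mu$ is nonetheless correct, and your explicit check that $\eta>\lambda$ (hence $\eta$ is genuinely singular) is a detail the paper leaves implicit.
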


\begin{proof}
Let $\eta$ be the first cardinal below $\mu$ for which $\eta^\lambda\geq\mu$.
Such a cardinal exists by virtue of Theorem \ref{thmmt} and the assumptions of the claim.
If $\theta<\eta$ and $\theta^\lambda\geq\eta$ then $\theta^\lambda=(\theta^\lambda)^\lambda\geq\eta^\lambda\geq\mu$, and this is impossible since $\eta$ is the first cardinal which satisfies $\eta^\lambda\geq\mu$, hence $\theta<\eta\Rightarrow\theta^\lambda<\eta$.
Thus necessarily $\cf(\eta)\leq\lambda$.
Indeed, if $\cf(\eta)>\lambda$ then $\eta^\lambda=\bigcup_{\theta<\eta}\theta^\lambda=\eta<\mu$.
The proof of the claim is accomplished.
\end{proof}

There are some interesting upshots which follow from this claim.
Call $\mu$ a \emph{weird} cardinal iff $\mu$ is singular and there is some $\lambda<\mu$ such that $2^\lambda\geq\mu$ and singular compactness for list chromatic numbers fails at $\mu$ with respect to $\lambda$.
If $\lambda$ is specified then we shall say that $\mu$ is $\lambda$-weird.
Let us show that in some sense the collection of wierdos is small.
Fix a cardinal $\kappa$ of uncountable cofinality.
Suppose that $\mu<\kappa$ is $\lambda$-weird for some $\lambda<\mu$.
From Claim \ref{clmoptimal} we see that there is some $\eta<\mu$ so that $\eta^\lambda\geq\mu$.
If $\eta^\lambda<\kappa$ for every $\lambda$-wierd cardinal $\mu<\kappa$ then $\kappa$ will be called a \emph{sane} cardinal.

\begin{corollary}
\label{corweird} Let $\lambda$ be an infinite cardinal.
Suppose that $\kappa$ is sane.
Then the set of weird cardinals below $\kappa$ is not stationary in $\kappa$.
\end{corollary}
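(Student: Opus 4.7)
The plan is to argue by contradiction: assume the set $W$ of weird cardinals below $\kappa$ is stationary, and then use Claim \ref{clmoptimal} together with Fodor's lemma to contradict sanity. For each $\mu\in W$ pick $\lambda(\mu)<\mu$ witnessing that $\mu$ is $\lambda(\mu)$-weird, and let $\eta(\mu)<\mu$ be the least cardinal satisfying $\eta(\mu)^{\lambda(\mu)}\geq\mu$, which exists by Claim \ref{clmoptimal}. A brief check shows $\lambda(\mu)<\eta(\mu)$: if instead $\lambda(\mu)\geq\eta(\mu)$ then $\eta(\mu)^{\lambda(\mu)}=2^{\lambda(\mu)}$, but the weirdness of $\mu$ gives $2^{\lambda(\mu)}<\mu$, contradicting $\eta(\mu)^{\lambda(\mu)}\geq\mu$.

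Now $\mu\mapsto\eta(\mu)$ is a regressive function on $W$. Applying Fodor's lemma (using that $\kappa$ has uncountable cofinality), I obtain a stationary $W'\subseteq W$ on which $\eta(\mu)$ takes a constant value $\eta^*$. By sanity, $\eta^*\leq\eta^{*\,\lambda(\mu)}<\kappa$ for each $\mu\in W'$, so $\eta^*<\kappa$. On $W'$ every $\lambda(\mu)$ lies below $\eta^*$, hence in a set of size $|\eta^*|<\kappa$. Pressing down once more (or pigeonholing on the stationary set $W'$ against the $\leq|\eta^*|$ possible values) produces a stationary $W''\subseteq W'$ on which $\lambda(\mu)$ equals a fixed $\lambda^*$.

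For every $\mu\in W''$ we now have $\mu\leq\eta(\mu)^{\lambda(\mu)}=\eta^{*\,\lambda^*}$, and since $W''$ is stationary in $\kappa$ it is unbounded, yielding $\kappa\leq\eta^{*\,\lambda^*}$. On the other hand, each $\mu\in W''$ is $\lambda^*$-weird with associated cardinal $\eta^*$, so sanity gives $\eta^{*\,\lambda^*}<\kappa$, a contradiction.

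The main obstacle is technical rather than conceptual: one must ensure Fodor's lemma applies cleanly when $\kappa$ is not regular. If $\kappa$ is regular uncountable the argument above is standard; if $\kappa$ is singular of uncountable cofinality $\sigma$ one should transfer the setup to $\sigma$ via a continuous cofinal sequence of cardinals through $\kappa$ and apply Fodor there, using that $|\eta^*|<\kappa$ implies $|\eta^*|<\sigma$ after passing to a stationary subset. All other ingredients — the regressive choice of $\eta(\mu)$, the pigeonhole on $\lambda(\mu)$, and the final unboundedness step — are routine.
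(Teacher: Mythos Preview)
Your argument is correct and follows essentially the same strategy as the paper: assume the set of weird cardinals is stationary, press down on the regressive functions $\mu\mapsto\lambda(\mu)$ and $\mu\mapsto\eta(\mu)$ to stabilize both at values $\lambda^*,\eta^*$, and contradict sanity via unboundedness. The only real difference is the order of the two Fodor steps: the paper first fixes $\lambda$ (reducing to the stationary set of $\lambda$-weird cardinals) and then fixes $\eta$, whereas you first fix $\eta^*$ and then use your auxiliary observation $\lambda(\mu)<\eta(\mu)=\eta^*$ to pigeonhole on $\lambda$. Either order works and the contradiction is identical.

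One remark on your final paragraph: you are right to flag the singular case, and in fact the paper applies Fodor without comment there, so you are being the more careful of the two. However, your suggested fix ``$|\eta^*|<\kappa$ implies $|\eta^*|<\sigma$ after passing to a stationary subset'' is not correct as written; if you want the singular case fully justified you would need to argue via the bounded form of Fodor (regressive functions on stationary subsets of a singular $\kappa$ are \emph{bounded} on a stationary set) and then observe that each fiber $\{\mu:\eta_\mu=\eta\}$ is itself bounded below $\kappa$ by sanity.
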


\begin{proof}
As a first step, we fix an infinite cardinal $\lambda$ and show that the set of $\lambda$-weird cardinals below $\kappa$ is not stationary.
Call this set $S$, and assume towards contradiction that $S$ is stationary in $\kappa$.
By cutting-off an initial segment we may assume that $2^\lambda<\min(S)$.
Applying Claim \ref{clmoptimal} to each element of $S$, let $\eta_\mu<\mu$ be the pertinent cardinal, and since $\kappa$ is sane we see that $\eta_\mu^\lambda<\kappa$ for every $\mu\in S$.

The function $h(\mu)=\eta_\mu$ is regressive on $S$, hence there is a stationary subset $T\subseteq S$ and a fixed $\eta$ such that $\mu\in T\Rightarrow\eta_\mu=\eta$.
Since $\kappa$ is sane we see that $\eta^\lambda<\kappa$.
On the other hand, since $T$ is unbounded in $\kappa$ one can pick some $\mu\in T$ so that $\eta^\lambda<\mu$ and this is impossible since $\eta^\lambda=\eta_\mu^\lambda\geq\mu$.

For the second step, let $S_0$ be the set of weird cardinals below $\kappa$.
For every $\mu\in S_0$ there exists some $\lambda_\mu<\mu$ such that $\mu$ is $\lambda_\mu$-weird.
If $S_0$ is stationary then there are a fixed $\lambda$ and a stationary subset $S_1$ of $S_0$ so that $\mu\in S_1\Rightarrow\lambda_\mu=\lambda$.
Apply the first step to $S_1$ in order to obtain the desired contradiction.
\end{proof}

\newpage

\section{Reflections on reflection}

In this section we deal with another phenomenon related to the list chromatic number.
One can analyze reflection principles with respect to invariants of graphs, including the list chromatic number.
The general idea is to show that if a graph has a certain value of the list chromatic number then there is already a \emph{small} graph with the same property.

\begin{definition}
\label{defrplist} Reflection for list chromaticity. \newline
Assume that $\lambda\geq\aleph_2$.
\begin{enumerate}
\item [$(\aleph)$] ${\rm RP}({\rm List},\lambda)$ is the statement that if $G$ is a graph, $|G|\leq\lambda$ and ${\rm List}(G)>\aleph_0$ then there exists $H\leq G$ such that $|H|=\aleph_1$ and ${\rm List}(H)>\aleph_0$.
\item [$(\beth)$] ${\rm RP}({\rm List})$ is the statement that ${\rm RP}({\rm List},\lambda)$ holds for every $\lambda\geq\aleph_2$.
\item [$(\gimel)$] Global ${\rm RP}({\rm List})$, or ${\rm GRP}({\rm List})$, is the statement that for every $\lambda\geq\aleph_2$ and every graph $G$ such that $|G|\leq\lambda$ if ${\rm List}(G)>\kappa$ then there exists $H\leq G$ such that $|H|=\kappa^+$ and ${\rm List}(H)>\kappa$.
\end{enumerate}
\end{definition}

Usuba showed that ${\rm RP}({\rm List})$ has some consistency strength by proving that if ${\rm RP}({\rm List})$ holds then for every $\lambda\geq\cf(\lambda)>\omega$ either $2^\lambda>\lambda^+$ or every stationary subset of $S^{\lambda^+}_\omega$ reflects.
Consequently, either $\square_\lambda$ fails or $\textsf{SCH}$ at $\lambda$ fails under ${\rm RP}({\rm List})$, see \cite[Proposition 4.6]{MR4732367}.
Another interesting fact is that ${\rm RP}({\rm List},2^\omega)$ implies $2^\omega=\omega_1$.
Based on these theorems, Usuba asked whether ${\rm RP}(\rm List)$ implies \textsf{SCH}.
Our next goal is to prove that ${\rm GRP}({\rm List})$ implies \textsf{SCH}.
In fact, ${\rm GRP}({\rm List})$ implies a bit more and this will be discussed after the proof.
The proof itself is a generalization of Komj\'{a}th in \cite{MR3096584} who proved similar statements in the countable case.

\begin{theorem}
\label{thmgrplist} ${\rm GRP}({\rm List})$ implies SCH.
\end{theorem}

\begin{proof}
Suppose that $\mu$ is a strong limit cardinal, either regular or singular.
Let $G$ be a bipartite graph with a bipartition of $V^G$ into $A$ and $B$ such that $|A|=\mu$ and $|B|=\lambda<2^\mu$.
Let us prove that ${\rm List}(G)\leq\mu$.

Firstly, let $A=\{v_\alpha:\alpha\in\mu\}$ and $B=\{w_\beta:\beta\in\lambda\}$.
Secondly, let $F$ be any $\mu$-assignment defined on $A\cup B$.
By induction on $\alpha\in\mu$ we choose $\eta_f\in F(v_\alpha)$ for every function $f\in{}^\alpha 2$ such that $f\neq g\Rightarrow\eta_f\neq\eta_g$ whenever $\beta\leq\alpha$ and $g\in{}^\beta 2$.
The choice is possible since $\mu$ is strong limit.
Specifically, if $\alpha\in\mu$ let $\theta=\bigcup\{2^\beta:\beta\leq\alpha\}$ so $\theta<\mu$.
Since $|F(v_\alpha)|=\mu$ one can choose a distinct element $\eta_f\in F(v_\alpha)$ for every $f\in\bigcup\{{}^\beta 2:\beta\leq\alpha\}$.

Suppose that $h\in{}^\mu 2$ and let $T_h=\{\eta_{h\upharpoonright\alpha}:\alpha\in\mu\}$.
By our choice if $\alpha\neq\beta$ then $\eta_{h\upharpoonright\alpha}\neq\eta_{h\upharpoonright\beta}$ and hence $|T_h|=\mu$.
On the other hand if $g\neq h$ then $|T_g\cap T_h|<\mu$ since $g\upharpoonright\beta\neq h\upharpoonright\beta$ from some $\beta$ onwards.
It follows that if $\beta\in\lambda$ then there is at most one function $h\in{}^\mu 2$ for which $F(w_\beta)\subseteq T_h$.
Since $\lambda<2^\mu$ one can choose $h\in{}^\mu 2$ such that $\forall\beta\in\lambda,\neg(F(w_\beta)\subseteq T_h)$.
Define $f(v_\alpha)=\eta_{h\upharpoonright\alpha}$ for every $\alpha\in\mu$ and choose $f(w_\beta)\in F(w_\beta)-T_h$ for every $\beta\in\lambda$.
By the above considerations, $f$ is a good coloring of $G$ and hence ${\rm List}(G)\leq\mu$.

Consider now the complete bipartite graph $K_{\mu,2^\mu}$.
Let $A\cup B$ be a bipartition of this graph with $A=\{a_\alpha:\alpha\in\mu\}$ and $B=\{b_\beta:\beta\in 2^\mu\}$.
Let us show that ${\rm List}(K_{\mu,2^\mu})>\mu$.

As a first step choose $\mathcal{A}=(A_\alpha:\alpha\in\mu)$ such that $|A_\alpha|=\mu$ for every $\alpha\in\mu$ and $\alpha_0\neq\alpha_1\Rightarrow A_{\alpha_0}\cap A_{\alpha_1}=\varnothing$.
Now let $E=\prod\{A_\alpha:\alpha\in\mu\}$, so $|E|=\mu^\mu=2^\mu$.
Define a $\mu$-assignment $F:A\cup B\rightarrow[{\rm Ord}]^\mu$ as follows.
For every $\alpha\in\mu$ let $F(a_\alpha)=A_\alpha$.
Fix a bijection $h:B\rightarrow E$, say $h(b_\beta)=g_\beta\in E$ for every $\beta\in 2^\mu$, and define $F(b_\beta)=\{g_\beta(\alpha):\alpha\in\mu\}$.
Notice that $|F(b_\beta)|=\mu$ by the disjointness of the $A_\alpha$s.

Suppose that $f$ is a coloring defined on $A\cup B$ so that $f(v)\in F(v)$ for every $v\in A\cup B$.
Since $f\upharpoonright{A}\in E$ there exists a unique $\beta\in 2^\mu$ such that $f\upharpoonright{A}=g_\beta$.
It follows that $f(b_\beta)\in F(b_\beta)=\{g_\beta(\alpha):\alpha\in\mu\}=\{f(a_\alpha):\alpha\in\mu\}$, so for some $a_\alpha\in A$ we have $f(b_\beta)=f(a_\alpha)$.
This fact shows that $f$ is not a good coloring since every element of $A$ is connected with every element of $B$, hence ${\rm List}(K_{\mu,2^\mu})>\mu$ as desired.

Assume now that ${\rm GRP}({\rm List})$ holds and $\mu$ is a strong limit cardinal.
If $2^\mu>\mu^+$ then $G=K_{\mu,2^\mu}$ is a graph of size greater than $\mu^+$ and ${\rm List}(G)>\mu$.
But if $H\leq G$ and $|H|=\mu^+$ then $H$ is a subgraph of $K_{\mu,\mu^+}$ and hence ${\rm List}(H)\leq\mu$ contradicting ${\rm GRP}({\rm List})$.
We conclude, therefore, that $2^\mu=\mu^+$ as required.
\end{proof}

By the fundamental work of Gitik in \cite{MR1007865} we know that the consistency strength of ${\rm GRP}({\rm List})$ is at least a measurable cardinal $\kappa$ with $o(\kappa)=\kappa^{++}$.
We indicate, however, that ${\rm GRP}({\rm List})$ implies more than \textsf{SCH}, as it implies $2^\kappa=\kappa^+$ at every strongly inaccessible cardinal $\kappa$.

\newpage

\bibliographystyle{alpha}
\bibliography{arlist}

\begin{thebibliography}{ERT80}

\bibitem[ERT80]{MR593902}
Paul Erd\H{o}s, Arthur~L. Rubin, and Herbert Taylor.
\newblock Choosability in graphs.
\newblock In {\em Proceedings of the {W}est {C}oast {C}onference on
  {C}ombinatorics, {G}raph {T}heory and {C}omputing ({H}umboldt {S}tate
  {U}niv., {A}rcata, {C}alif., 1979)}, Congress. Numer., XXVI, pages 125--157.
  Utilitas Math., Winnipeg, Man., 1980.

\bibitem[Git89]{MR1007865}
Moti Gitik.
\newblock The negation of the singular cardinal hypothesis from
  {$o(\kappa)=\kappa^{++}$}.
\newblock {\em Ann. Pure Appl. Logic}, 43(3):209--234, 1989.

\bibitem[Kom13]{MR3096584}
P\'{e}ter Komj\'{a}th.
\newblock The list-chromatic number of infinite graphs.
\newblock {\em Israel J. Math.}, 196(1):67--94, 2013.

\bibitem[MS94]{MR1249391}
Menachem Magidor and Saharon Shelah.
\newblock When does almost free imply free? ({F}or groups, transversals, etc.).
\newblock {\em J. Amer. Math. Soc.}, 7(4):769--830, 1994.

\bibitem[She75]{MR389579}
Saharon Shelah.
\newblock A compactness theorem for singular cardinals, free algebras,
  {W}hitehead problem and transversals.
\newblock {\em Israel J. Math.}, 21(4):319--349, 1975.

\bibitem[Usu24]{MR4732367}
Toshimichi Usuba.
\newblock The list-chromatic number and the coloring number of uncountable
  graphs.
\newblock {\em Israel J. Math.}, 259(1):129--167, 2024.

\bibitem[Viz76]{MR498216}
V.~G. Vizing.
\newblock Coloring the vertices of a graph in prescribed colors.
\newblock {\em Diskret. Analiz}, (29, Metody Diskret. Anal. v Teorii Kodov i
  Shem):3--10, 101, 1976.

\end{thebibliography}

\end{document}